\tikzset{
    position/.style args={#1:#2 from #3}{
        at=(#3.#1), anchor=#1+180, shift=(#1:#2)
    }
}
\newtheorem{thm}{Theorem}
\newtheorem{lemma}[thm]{Lemma}
\newtheorem{prop}[thm]{Proposition}
\theoremstyle{definition}
\newtheorem{defn}{Definition}
\newtheorem{example}[thm]{Example}
\newtheorem*{rem}{Remark}
\newtheorem*{question}{Question}
\theoremstyle{remark}
\begin{document}

\author[M. P. Cohen]{Michael P. Cohen}
\email[1]{mcohen@carleton.edu}

\author[T. Johnson]{Todd Johnson}
\email[2]{johnsont2@carleton.edu}

\author[A. Kral]{Adam Kral}
\email[3]{krala@carleton.edu}

\author[A. Li]{Aaron Li}
\email[4]{lia2@carleton.edu}

\author[J. Soll]{Justin Soll}
\email[5]{sollj2@carleton.edu}

\address{Department of Mathematics and Statistics,
Carleton College,
One North College Street,
Northfield, MN 55057}

\subjclass[2010]{54A05, 54A35, 03E15, 03E65, 06F05}


\title{A Kuratowski closure-complement variant whose solution is independent of ZF}

\begin{abstract}  We pose the following new variant of the Kuratowski closure-complement problem:  How many distinct sets may be obtained by starting with a set $A$ of a Polish space $X$, and applying only closure, complementation, and the $d$ operator, as often as desired, in any order?  The set operator $d$ was studied by Kuratowski in his foundational text \textit{Topology: Volume I}; it assigns to $A$ the collection $dA$ of all points of second category for $A$.  We show that in ZFC set theory, the answer to this variant problem is $22$.  In a distinct system equiconsistent with ZFC, namely ZF+DC+PB, the answer is only $18$.
\end{abstract}

\maketitle

\section*{Introduction}

Kuratowski's \textit{closure-complement theorem}, a result of his 1922 thesis, states that at most $14$ distinct sets are obtainable by applying the operations of closure and complementation to any particular initial set $A$ in any topological space $X$.  The algebraic result underlying this theorem is that the monoid generated by the set operators $k$ (closure) and $c$ (complement) has cardinality $\leq 14$.  This surprising and amusing result has inspired a substantial literature of generalizations and variants; see for example \cite{Gaida_Eremenko_1974a}, \cite{Gardner_Jackson_2008a}, \cite{Sherman_2010a}, \cite{Banakh_2018a}, \cite{CCGS_2020a} or visit Bowron's website \textit{Kuratowski's Closure-Complement Cornucopia} \cite{Bowron_2012a} for a comprehensive list of relevant literature.

The purpose of this note is to give an example of a natural variant of the Kuratowski closure-complement problem, whose solution turns out to be independent of ZF set theory.  To pose the problem, we let $X$ be a topological space.  Given a subset $A\subseteq X$, we say that a point $p\in X$ is a \textit{point of second category} for $A$ if whenever $U\subseteq X$ is an open neighborhood of $p$, we have $U\cap A$ nonmeager in $X$.  Then, we define

\begin{center} $dA=\{p\in X: p$ is a point of second category for $A\}$.
\end{center}
\vspace{.3cm}

The operator $d$ was apparently first defined by Kuratowski himself in his foundational text \textit{Topology Vol. 1} (\cite{Kuratowski_1966a}, first edition 1933) and is associated with the application of Baire category methods in general topology.  The operator has handy applications in important descriptive set theoretic results, especially as it appears in Pettis's lemma which states that if $A,B\subseteq X$ have the \textit{Baire property} (Definition \ref{defn_BP}), then $AB\supseteq id(A)id(B)$ (where $i$ denotes the topological interior operator), and thus $i(AB)$ is nonempty \cite{Pettis_1950a}.  This lemma implies that every Borel-measurable homomorphism between Polish (i.e., separable and completely metrizable) topological groups is automatically continuous (see \cite{Rosendal_2009a} for an admirable survey of this and many related results).  We ask the following.

\begin{question}  How many distinct sets may be obtained by starting with a subset $A$ of a Polish space $X$, and applying only the operators $k$, $c$, and $d$, as often as desired, in any order?  Equivalently, what is the maximal cardinality of the monoid of set operators generated by $k$, $c$, and $d$?
\end{question}

For the remainder of this paper, $X$ denotes an arbitrary Polish (separable completely metrizable) space, $k$ and $i$ the closure and interior operators on $X$ respectively, and $c$ the complementation operator.  We recall the DeMorgan's law for interiors and closures which states that $kc=ci$, or equivalently that $ic=ck$.  We let $\mathcal{KD}$ denote the monoid of set operators on $X$ generated by $k$, $c$, and $d$.  We first answer the question in the traditional domain, where we assume the usual ZF axioms plus the Axiom of Choice (AC).

\begin{thm}[ZFC] \label{thm1}  The cardinality of $\mathcal{KD}$ is $\leq 22$.  Moreover, if $X=\mathbb{R}$ with the usual topology, then there exists a set $A\subseteq\mathbb{R}$ for which $\#\{oA:o\in\mathcal{KD}\}=22$.
\end{thm}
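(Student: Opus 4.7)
The plan is to prove the upper bound $|\mathcal{KD}|\leq 22$ via algebraic reduction, then construct $A\subseteq\mathbb{R}$ realizing all $22$ distinct images. For the upper bound, I would first catalog the defining relations among $k,c,d$:
\begin{enumerate}[(i)]
  \item $kk=k$, $cc=1$, $dd=d$;
  \item $kd=d$ (since $dA$ is closed, as its complement $\bigcup\{U\text{ open}:U\cap A\text{ meager}\}$ is open);
  \item $dk=kckck$ (since for closed $B$ the boundary $B\setminus iB$ is nowhere dense, so $dB=kiB$, and $i=ckc$);
  \item $dck=kck$ (since $ckA$ is open and $dU=kU$ for every open $U$ in a Baire space);
  \item $kckcd=d$ (since $dA$ is regular closed: writing $U=cdA$ open, $\partial U\cap A$ is meager, hence $i\bar U\cap A$ is meager, whence $U=i\bar U$ by maximality of $U$);
  \item Kuratowski's relation $(kc)^3k=(kc)^2k$.
\end{enumerate}
Using (i)--(iv), every word reduces to one containing at most one $d$. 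For a one-$d$ word $W_1\,d\,W_2$, relations (ii)--(iv) force $W_2\in\{\emptyset,c\}$ and $W_1$ to be empty or a Kuratowski word ending in $c$; every such $W_1$ of length $\geq 4$ has $kckc$ as a suffix, so by (v) it reduces to a shorter one, leaving $W_1\in\{\emptyset,c,kc,ckc\}$. This gives $4\cdot 2=8$ reduced $d$-words $d,cd,kcd,ckcd,dc,cdc,kcdc,ckcdc$; adding the $14$ Kuratowski operators yields $|\mathcal{KD}|\leq 22$.

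For the lower bound I would construct $A=\bigsqcup_j A_j\subseteq\mathbb{R}$ as a disjoint union of pieces, each supported on its own open interval. Classical pieces (open intervals, singletons with accumulating sequences, rational subsets, irrational subsets) separate the $14$ Kuratowski operators. To separate the eight $d$-containing operators from one another and from the Kuratowski images, I would include a \emph{Bernstein set} $V$ on an additional interval $I_0$: because $V$ lacks the Baire property, both $V$ and $I_0\setminus V$ are nonmeager in every subinterval of $I_0$, so $dV\supseteq\overline{I_0}$ and $d(cV)\supseteq\overline{I_0}$, producing the category-theoretic asymmetry needed to separate $d$ from $k$ and $dc$ from $cd$. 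One then verifies directly on each interval piece that the $22$ operator images on $A$ are pairwise distinct.

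The main obstacle is the combinatorial bookkeeping in the lower bound: the construction of $A$ must be tuned so that no accidental coincidence among the $\binom{22}{2}$ pairs of operator images occurs, which requires a careful interval-by-interval verification. The use of a Bernstein set is what makes this count essentially ZFC-dependent --- exactly the point which is blocked under ZF+DC+PB.
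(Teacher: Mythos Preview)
Your upper-bound argument is correct and essentially matches the paper's: your relations (i)--(v) are exactly Lemma~\ref{lemma_main} parts (g), (b), (h), (c), (i), and your eight reduced $d$-words $\{d,cd,kcd,ckcd,dc,cdc,kcdc,ckcdc\}$ coincide (after rewriting $kc=ci$, $ckc=i$) with the paper's list $\{d,cd,cid,id,dc,cdc,cidc,idc\}$. The paper organizes the count by first computing the nine-element submonoid $\mathcal{KD}_0=\langle k,i,d\rangle$ and then analyzing $\mathcal{KD}_0\cup c\mathcal{KD}_0$ plus four extra operators; your direct word-reduction is a cleaner packaging of the same computation.

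The lower bound is where you genuinely diverge. The paper does \emph{not} use a Bernstein set: it proves two auxiliary results (Propositions~\ref{prop1} and~\ref{prop2}) constructing a Vitali set $V$ with \emph{prescribed} $dV=[8,9]$ and $kV=[8,10]$, so that $dV\subsetneq kV$. This extra degree of freedom lets the paper separate the four Baire-property-collapsing pairs $(idc,cd)$, $(id,cdc)$, $(d,cidc)$, $(dc,cid)$ already on $V$ alone. Your Bernstein set $V\subseteq I_0$ instead gives $dV=kV=\overline{I_0}$ and $dcV=\mathbb{R}$; this still separates those four pairs (e.g.\ $idcV=\mathbb{R}\neq\mathbb{R}\setminus\overline{I_0}=cdV$), and once combined with the classical Kuratowski pieces on disjoint intervals the full verification of $22$ distinct images goes through. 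So your route works, and is arguably more economical since Bernstein sets are an off-the-shelf construction, whereas the paper's tailored Vitali set requires the two propositions. The trade-off is that the paper's $V$ carries more of the separating burden by itself, while yours leans more on the classical pieces. One small remark: your parenthetical ``because $V$ lacks the Baire property'' is not quite the right justification---the everywhere-nonmeagerness of $V$ and $I_0\setminus V$ follows directly from the Bernstein property (each meets every perfect subset of $I_0$), not merely from failure of BP.
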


On the other hand, weak forms of AC are not sufficient to obtain the solution above.  We denote by DC the Axiom of Dependent Choice, which is equivalent over ZF to the Baire Category Theorem.  We denote by PB the axiom that ``every subset of every Polish space has the Baire property,'' and we recall the definition of the Baire property below.

\begin{defn} \label{defn_BP}  A set $A\subseteq X$ has the \textit{Baire property} if there exists an open set $U\subseteq X$ for which the symmetric difference $A\Delta U=(A-U)\cup(U-A)$ is a meager set.
\end{defn}

In the seminal paper \cite{Solovay_1970a}, Solovay showed that if ZF is consistent with the existence of an inaccessible cardinal, then ZF+DC+PB is consistent.  In \cite{Shelah_1984a}, Shelah improved this result to show that ZFC and ZF+DC+PB are equiconsistent axiom systems.  Our second theorem below shows that the solution to this natural extension of the Kuratowski problem differs in this alternative axiom system, and thus the solution is independent of ZF.

\begin{thm}[ZF+DC+PB] \label{thm2}  The cardinality of $\mathcal{KD}$ is $\leq 18$.  Moreover, if $X=\mathbb{R}$ with the usual topology, then there exists a set $A\subseteq\mathbb{R}$ for which $\#\{oA:o\in\mathcal{KD}\}=18$.
\end{thm}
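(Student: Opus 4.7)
The plan is to extend the ZFC argument of Theorem~\ref{thm1} by adjoining a single new operator identity on $\mathcal{KD}$ that holds under PB, and then to adjust the witness example correspondingly. The key identity is $cdc = id$; that is, $cdcA = idA$ for every $A \subseteq X$. To derive it, I would fix $A \subseteq X$ and invoke PB to write $A = U \Delta M$ with $U \subseteq X$ open and $M \subseteq X$ meager. A direct calculation from the definition of $d$ then gives $dA = kU$: indeed, $p \in dA$ iff every open neighborhood $V$ of $p$ has $V \cap A$ nonmeager, but $V \cap A$ and $V \cap U$ differ by the meager set $V \cap M$, so (using DC to invoke the Baire Category Theorem) this amounts to $V \cap U$ being nonempty, i.e., $p \in kU$. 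Applying the same reasoning to $cA$, which differs from the open set $ckU$ by the meager set $M \cup \partial U$, yields $dcA = kckU$; therefore $cdcA = c(kckU) = ikU = idA$ by DeMorgan.

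With $cdc = id$ established, I would rerun the word-enumeration underlying Theorem~\ref{thm1}, now using this new relation as an additional reduction rule alongside the ZFC+DC identities $kd = d$, $d^2 = d$, $dkd = d$, $kid = d$ (reflecting that $dA$ is always regular closed under DC) and the Kuratowski 14-element relation on $\{k, c\}$. Numerous corollaries of $cdc = id$ then hold in the abstract monoid, including $dc = cid$ (left-multiply by $c$), $kcdc = d$ (left-multiply by $k$ and use $kid = d$), $dcd = dc$, and $dcdc = d$. The task is to verify that under these new rules the 22-element ZFC list from Theorem~\ref{thm1} collapses to exactly $18$ equivalence classes, with no additional unanticipated identifications.

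Finally, to exhibit a subset $A \subseteq \mathbb{R}$ with $\#\{oA : o \in \mathcal{KD}\} = 18$, I would construct $A$ as a disjoint union of small \emph{building blocks} placed in pairwise disjoint open intervals of $\mathbb{R}$. Standard building blocks for Kuratowski-type witnesses include open intervals, closed intervals, Cantor-like closed nowhere-dense sets, dense $G_\delta$ sets such as the irrationals in an interval, and countable dense meager sets such as $\mathbb{Q}$ in an interval; each block contributes a piece chosen to distinguish a particular pair among the remaining 18 operators. I expect the main obstacle to lie in the middle step: verifying that $cdc = id$ together with its consequences collapses the ZFC list from 22 operators to exactly 18 (neither more nor fewer, as either would render the two halves of the theorem incompatible). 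The construction of the witness in $\mathbb{R}$ is largely a matter of adapting the ZFC construction from Theorem~\ref{thm1} to the reduced operator set and checking that no accidental coincidences arise.
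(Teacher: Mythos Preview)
Your proposal is correct and rests on the same key identity as the paper: under PB, $cdcA = idA$ for every $A$ (the paper records this as condition~(d) in Lemma~\ref{lemma_BP}). Your direct derivation via $A = U \Delta M$, $dA = kU$, $dcA = kckU$ is sound and in fact a bit more streamlined than the paper's chain of equivalences $(a)\Rightarrow(b)\Rightarrow\cdots$ in Lemma~\ref{lemma_BP}.

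Where you diverge is in the enumeration strategy. You work \emph{top-down}: take the 22-element ZFC list from Theorem~\ref{thm1} and collapse it using $cdc = id$ and its corollaries (which indeed identify exactly the four pairs $(cdc,id)$, $(dc,cid)$, $(idc,cd)$, $(cidc,d)$, leaving 18). The paper instead works \emph{bottom-up} and independently of Theorem~\ref{thm1}: it first shows, using only the ZF+DC identities of Lemma~\ref{lemma_main}, that the submonoid $\mathcal{KD}_0 = \langle k,i,d\rangle$ has at most nine elements $\{e,i,k,ki,ik,iki,kik,d,id\}$, and then checks that $\mathcal{KD}_0 \cup c\mathcal{KD}_0$ is closed under left multiplication by $k$, $c$, $d$ (the last of these using the PB identity $dc = cid$). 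The paper's route is self-contained and exhibits a clean even/odd decomposition; your route is equally valid but presupposes the 22-element ZFC enumeration, so the logical dependency between the two theorems is reversed. For the witness, the paper gives exactly the kind of building-block set you anticipate, namely $A = (1,2)\cup(2,3)\cup\{4\}\cup[(5,6)\cap\mathbb{Q}]\cup[(6,7)\cap(\mathbb{R}\setminus\mathbb{Q})]$, and tabulates the nine distinct images under $\mathcal{KD}_0$.
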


\section*{Preliminaries and Sets with the Baire Property}

First we establish the basic properties of the operator $d$, most of which are observed without proof in \cite{Kuratowski_1966a} 4.IV.

\begin{lemma}[ZF+DC] \label{lemma_main}  Let $X$ be a Polish space, and $A,B\subseteq X$.
\begin{enumerate}[(a)]
		\item $A\subseteq B$ implies $dA\subseteq dB$.
		\item $dA$ is closed and therefore $dA\subseteq kA$.
		\item $A$ open implies $dA=kA$.
		\item $d(A\cup B)=dA\cup dB$.
		\item $A-dA$ is meager.
		\item $A$ is meager in $X$ if and only if $dA=\emptyset$.
		\item $ddA=dA$.
		\item $dkA=kikA$.
		\item $kidA=dA$.
\end{enumerate}
\end{lemma}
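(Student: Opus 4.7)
The plan is to verify the nine parts in order, since the later items either quote earlier ones or re-use the same decomposition technique. Parts (a), (b), (d), and (f) are essentially symbol-chases through the definition of $d$. For (a), a larger $A$ enlarges every neighborhood intersection, preserving the ``nonmeager'' condition. For (b), the complement of $dA$ is the union of those open $U$ with $U\cap A$ meager and is therefore open; the inclusion $dA\subseteq kA$ is immediate since $p\notin kA$ yields an open neighborhood of $p$ disjoint from $A$, hence with meager intersection. Part (d) follows from (a) in one direction, and in the other by intersecting two witnessing neighborhoods and using closure of meager sets under finite union. Part (f) is two lines from (e).

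The technical core is (e), which is where separability enters. I would fix a countable base $\mathcal{B}$ of $X$ and set $W=\bigcup\{B\in\mathcal{B}:B\cap A\text{ is meager}\}$. Then $W\cap A$ is a \emph{countable} union of meager sets, hence meager under DC (which guarantees $\sigma$-additivity of the meager ideal via the Baire Category Theorem). Any $p\in A\setminus dA$ sits inside some open $U$ with $U\cap A$ meager, and therefore inside a basic $B\in\mathcal{B}$ with $B\subseteq U$, so $B\cap A$ is meager and $p\in W$. Hence $A\setminus dA\subseteq W\cap A$, which is meager. Part (c) is the other place DC is used: for open $A$ and any $p\in kA$, each open neighborhood of $p$ meets $A$ in a nonempty open set, which BCT guarantees is nonmeager, so $kA\subseteq dA$; the reverse is (b).

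For (g)--(i) I would lean on closedness of $dA$ from (b) together with the elementary fact that the boundary of a closed set is itself closed with empty interior, hence nowhere dense and meager. For (g): $ddA\subseteq kdA=dA$ by (b), and conversely for $p\in dA$ and a neighborhood $U$, the decomposition $U\cap A=(U\cap A\cap dA)\cup((U\cap A)\setminus dA)$ has meager second piece by (e), forcing $U\cap dA$ to be nonmeager. For (h): writing $kA=ikA\cup(kA\setminus ikA)$ with the boundary piece meager, (d) plus (f) reduce $dkA$ to $d(ikA)$, and (c) applied to the open set $ikA$ then gives $kikA$. For (i): $kidA\subseteq dA$ is immediate from closedness of $dA$; for the reverse, given $p\in dA$ and a neighborhood $U$, split $U\cap A$ into its intersections with $idA$, with the boundary $dA\setminus idA$, and with $X\setminus dA$. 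The last two are meager (by the boundary remark and by (e) respectively), so $U\cap A\cap idA$ is nonmeager and hence nonempty, whence $p\in k(idA)$.

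I expect the only conceptual subtlety to be making sure every $\sigma$-additive step on the meager ideal sits safely inside DC (rather than secretly assuming countable choice on larger families); beyond this, each item is a disciplined bookkeeping exercise that chains (b), (c), and (e) with a topological decomposition of $kA$ or $dA$ into its interior and its boundary.
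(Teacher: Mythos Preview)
Your proposal is correct and matches the paper's argument almost line-for-line through parts (a)--(h); the only noticeable cosmetic difference is that for (b) the paper checks that limit points of $dA$ lie in $dA$, whereas you observe that $X\setminus dA$ is a union of open sets---these are of course equivalent.

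The one genuinely different step is (i). The paper proves it purely algebraically by chaining the earlier identities: since $dA$ is closed, $kdA=dA$, so $kidA=kikdA$; then (h) applied to $dA$ gives $kikdA=dkdA=ddA$, and (g) finishes with $ddA=dA$. Your route is instead a direct pointwise argument in the spirit of your proofs of (g) and (h): decompose $U\cap A$ against the partition $idA$, $dA\setminus idA$, $X\setminus dA$, kill the latter two pieces as meager (boundary of a closed set, and (e) respectively), and conclude $U$ meets $idA$. Both are short; the paper's version has the virtue of exhibiting (i) as a formal consequence of (b), (g), (h) with no new topology, while yours is self-contained and does not need (h) at all.
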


\begin{proof} \textit{(a)}  Immediate from the definition of $d$.

\textit{(b)}  Suppose $p\in X$ is a limit point of $dA$.  Given an arbitrary open neighborhood $U$ of $p$, it means there is $x\in dA$ with $x\in U$.  Since $x$ is a point of second category for $A$, $U\cap A$ is nonmeager in $X$, whence $p\in dA$.

\textit{(c)}  We have $dA\subseteq kA$ by (c).  Conversely if $p\in kA$, then each open neighborhood $U$ of $p$ will satisfy $U\cap A\neq\emptyset$.  Assuming $A$ is open, then $U\cap A$ is a nonempty open set and hence nonmeager by the Baire Category Theorem (equivalent to DC).  Thus $p\in dA$.

\textit{(d)}  By (a), we have $dA\cup dB\subseteq d(A\cup B)$.  Conversely suppose $p\notin dA\cup dB$.  Then there are open neighborhoods $U,V$ of $P$ so that $U\cap A$ is meager and $V\cap B$ is meager.  But then $U\cap V$ is an open neighborhood of $p$ whose intersection with $A\cup B$ is meager, because $(U\cap V)\cap(A\cup B)\subseteq (U\cap A)\cup (V\cap B)$, where the latter is a union of two meager sets and hence meager.

\textit{(e)}  Since $X$ is Polish, we may find a countable family of open sets $\{B_i:i\in\mathbb{N}\}$ which comprise a basis for the topology of $X$.  Form the subcollection $\mathcal{C}=\{B_i:B_i\cap A$ is meager$\}$.  For each $x\in A-dA$, since $x$ is not a point of second category for $A$, we may find a neighborhood $B_i$ for which $x\in B_i$ and $B_{i}\cap A$ is meager, so $B_i\in\mathcal{C}$.  This shows $A-dA\subseteq\bigcup_{B_i\in\mathcal{C}}B_i\cap A$, so $A-dA$ is a subset of a countable union of meager sets and hence meager.

\textit{(f)}  If $A$ is meager, then $dA=\emptyset$ immediately from the definition of $d$.  Conversely, if $dA=\emptyset$, then $A=A-dA$ and $A$ is meager by (e).

\textit{(g)}  By (b), $ddA\subseteq kdA=dA$.  By (a), (d), (e), and (f), $dA\subseteq d((A-dA)\cup dA)=d(A-dA)\cup ddA=ddA$.

\textit{(h)}  Note that $kA-ikA$ is a closed nowhere dense set and hence meager.  So by (c), (d) and (f), we have $dkA=d((kA-ikA)\cup ikA)=d(kA-ikA)\cup dikA=\emptyset\cup kikA=kikA.$

\textit{(i)}  By (b), (g), and (h), $kidA=kikdA=dkdA=ddA=dA$.
\end{proof}

\begin{lemma}[ZF+DC] \label{lemma_BP}  Let $X$ be a Polish space, and $A\subseteq X$.  Then the following are equivalent.
\begin{enumerate}[(a)]
		\item $A$ has the Baire property.
		\item $dA-A$ is meager.
		\item $idcA=cdA$.
		\item $idA=cdcA$.
		\item $dA=cidcA$.
		\item $dcA=kcdA$.
\end{enumerate}
\end{lemma}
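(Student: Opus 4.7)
My plan is to group the six statements into two clusters and bridge between them. The identities (c), (d), (e), (f) are equivalent by pure algebraic manipulation: substituting $cA$ for $A$ in (c) yields (d); complementing both sides of (c) yields (e) (using $cc = \mathrm{id}$); and complementing both sides of (d) yields $cidA = dcA$, which becomes (f) via the DeMorgan identity $ci = kc$. It thus suffices to prove (a) $\Leftrightarrow$ (b) directly and then close the loop with (a) $\Rightarrow$ (c) and (c) $\Rightarrow$ (b).

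For (a) $\Leftrightarrow$ (b): if $A$ has BP witnessed by open $U$ with $M := A \Delta U$ meager, then $A \subseteq U \cup M$ and $U \subseteq A \cup M$ combine with monotonicity, additivity, and the vanishing of $d$ on meager sets (Lemma~\ref{lemma_main}(a),(d),(f)) to give $dA = dU$, which equals $kU$ by Lemma~\ref{lemma_main}(c). Hence $dA - A \subseteq (kU - U) \cup (A \Delta U)$ is a union of two meager sets. Conversely, (b) combined with Lemma~\ref{lemma_main}(e) gives $A \Delta dA$ meager; since $dA$ is closed (Lemma~\ref{lemma_main}(b)), taking $U := idA$ makes $dA \Delta U$ nowhere dense and hence $A \Delta U$ meager.

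The implication (a) $\Rightarrow$ (c) is the subtlest step. Keeping the data $A = U \Delta M$ as above, $dA = kU$ and $cdA = ckU = icU$ by DeMorgan. To compute $dcA$, I observe that $cA = cU \Delta M$, but $cU$ is closed rather than open; the trick is to regularize it to the open set $icU$, verifying $cA \Delta icU \subseteq M \cup (kU - U)$ is meager. The same computation as in (a) $\Rightarrow$ (b) then gives $dcA = k(icU)$, so $idcA = ikicU$, and (c) reduces to $ikicU = icU$, i.e., to the assertion that $icU$ is a regular open set. Writing $ikicU = X - kikU$ by DeMorgan, this in turn reduces to $kikU = kU$, which is immediate from the sandwich $U \subseteq ikU \subseteq kU$ for $U$ open.

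Finally, (c) $\Rightarrow$ (b) exploits the identity $i(E \cap F) = iE \cap iF$. Under (c), $i(dA \cap dcA) = idA \cap idcA = idA \cap cdA = \emptyset$, so the closed set $dA \cap dcA$ has empty interior and is therefore meager. Then $dA - A = dA \cap cA \subseteq (dA \cap dcA) \cup (cA - dcA)$, where the second term is meager by Lemma~\ref{lemma_main}(e) applied to $cA$. The technical heart of the argument is therefore the regularization in (a) $\Rightarrow$ (c); the remaining steps are direct consequences of Lemma~\ref{lemma_main}.
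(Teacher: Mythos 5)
Most of your argument is sound, and in places cleaner than the paper's. Your (a) $\Rightarrow$ (b) via $dA=dU=kU$ and the inclusion $dA-A\subseteq(kU-U)\cup(A\Delta U)$, your regularization of $cU$ to $icU$ in (a) $\Rightarrow$ (c) (reducing the claim to $kikU=kU$ for open $U$), and your (c) $\Rightarrow$ (b) via $i(dA\cap dcA)=idA\cap cdA=\emptyset$ are all correct, and they differ from the paper, which instead proves (a) $\Leftrightarrow$ (b) and then the cycle (b) $\Rightarrow$ (c) $\Rightarrow$ (d) $\Rightarrow$ (e) $\Rightarrow$ (f) $\Rightarrow$ (b) by operator manipulations drawn from Lemma \ref{lemma_main}.

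There is, however, a genuine gap in your opening claim that (c), (d), (e), (f) are equivalent ``by pure algebraic manipulation.'' Complementing both sides does legitimately give (c) $\Leftrightarrow$ (e) and (d) $\Leftrightarrow$ (f) for the fixed set $A$. But ``substituting $cA$ for $A$ in (c) yields (d)'' is not a valid step here: substitution only shows that the identity (c) quantified over \emph{all} sets is equivalent to the identity (d) quantified over all sets, whereas the lemma asserts the equivalence for one particular $A$. That distinction is the whole point of the paper: in ZFC there are sets (Vitali sets) for which these identities fail, so knowing (c) holds for $A$ does not by itself tell you that (c) holds for $cA$, which is exactly what (d) for $A$ says. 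As written, your proof establishes the equivalence of (a), (b), (c), (e) but leaves (d) and (f) attached only by this invalid bridge. The gap is repairable inside your own framework: your regularization step already shows that if $A\Delta U$ is meager with $U$ open then $cA\Delta icU$ is meager, i.e.\ the Baire property is preserved under complementation; hence (c) for $A$ $\Rightarrow$ (a) for $A$ $\Rightarrow$ (a) for $cA$ $\Rightarrow$ (c) for $cA$, which is (d) for $A$, and conversely. Alternatively, prove (c) $\Rightarrow$ (d) for the fixed $A$ directly, as the paper does, using $dA\cup dcA=dX=X$ and the identities of Lemma \ref{lemma_main}.
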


\begin{proof} \textit{(a $\Rightarrow$ b)}  Assume $A$ has the Baire property, and find $U\subseteq X$ open so that $M=A\Delta U$ is a meager set.  We have $A=M\Delta U$, and therefore $dA=d((M-U)\cup(U-M))\cup\emptyset=d(M-U)\cup d(U-M)\cup d(M\cap U)=\emptyset\cup d((U-M)\cup (M\cap U))=dU$ by Lemma \ref{lemma_main} (d) and (f).  So $dA=dU$, and by applying the same argument, we obtain $dcA=dcU$, because $M=(cA)\Delta(cU)$.

Now we have 

\begin{align*}
dA-A &= (dA-A-dcA)\cup((dA-A)\cap dcA)\\
&\subseteq (cA-dcA)\cup(dA\cap dcA)\\
&= (cA-dcA)\cup(dU\cap dcU).
\end{align*}
\vspace{.3cm}

The set $cA-dcA$ is meager by Lemma \ref{lemma_main} (e), and the set $dU\cap dcU\subseteq kU\cap kcU=kU-U$ is nowhere dense.  So $dA-A$ is a subset of the union of two meager sets, hence meager.

\textit{(b $\Rightarrow$ a)}  Assume $dA-A$ is meager.  Set $U=idA$, so $U$ is an open subset of $X$.  We have $U-A$ meager because $U-A\subseteq dA-A$.  Also, $A-U=(A-dA)\cup(dA-U)$, where $A-dA$ is meager by Lemma \ref{lemma_main} (e), and $dA-U=dA-idA$ is closed nowhere dense, so $A-U$ is meager.  Therefore $A\Delta U$ is meager, and we conclude $A$ has the Baire property.

\textit{(b $\Rightarrow$ c)}  Assume $dA-A$ is meager.  Applying Lemma \ref{lemma_main} (b), (c), (d), (e), (f), and (i), as well as the DeMorgan's law for interior/closure, we have

\begin{align*}
idcA &= id((cA-dA)\cup(cA\cap dA))\\
&= i[d(cA-dA)\cup d(cA\cap dA)]\\
&= i[d(cdA-A)\cup\emptyset\cup d(dA-A)]\\
&= i[d(cdA-A)\cup d(A-dA)\cup\emptyset]\\
&= id((cdA-A)\cup (cdA\cap A))\\
&= idcdA\\
&= ikcdA\\
&= ckidA\\
&= cdA.
\end{align*}
\vspace{.2cm}

\textit{(c $\Rightarrow$ d)}  Assume $idcA=cdA$.  Note that $idcA\cup idA=idX=X$ by Lemma \ref{lemma_main} (d), so $idA\supseteq cidcA=kcdcA\supseteq cdcA$.  Conversely, $cdA\cup cdcA=idcA\cup cdcA=i(dcA\cup cdcA)=iX=X$, so $cdcA\supseteq ccdA=dA\supseteq idA$.  So $idA=cdcA$.

\textit{(d $\Rightarrow$ e)}  Assume $idA=cdcA$.  Taking the closure of both sides, we have $dA=kidA=kcdcA=cidcA$.

\textit{(e $\Rightarrow$ f)}  Assume $dA=cidcA$.  Since $dA\cup dcA=d(A\cup cA)=X$ by Lemma \ref{lemma_main} (d), we have $dcA\supseteq cdA$, and since $dcA$ is closed (Lemma \ref{lemma_main} (b)) we also have $dcA\supseteq kcdA$.  Conversely, we have $cidcA\cup kcdA=dA\cup kcdA\supseteq dA\cup cdA=X$, so $kcdA\supseteq ccidcA=idcA$.  Since $kcdA$ is closed, we also have $kcdA\supseteq kidcA=dcA$ by Lemma \ref{lemma_main} (i).  So $dcA=kcdA$.

\textit{(f $\Rightarrow$ b)}  Assume $dcA=kcdA$.  To show $dA-A$ is meager, by Lemma \ref{lemma_main} (f) it suffices to show that $d(dA-A)=\emptyset$.  We have $d(dA-A)=d(dA\cap cA)\subseteq (ddA\cap dcA)$ by Lemma \ref{lemma_main} (a).  Therefore by assumption, $d(dA-A)\subseteq dA\cap kcdA$, so $id(dA-A)\subseteq idA\cap ikcdA=idA\cap ckidA=idA\cap cdA\subseteq dA\cap cdA=\emptyset$.  Therefore $d(dA-A)=kid(dA-A)=k\emptyset=\emptyset$.
\end{proof}

\begin{proof}[Proof of Theorem \ref{thm2}]  We work in ZF+DC+PB.  We denote by $\mathcal{K}_0$ the monoid of set operators generated by $k$ and $i$, and by $\mathcal{KD}_0$ the monoid of set operators generated by $k$, $i$, and $d$.  Since $i=ckc$, $\mathcal{K}_0$ and $\mathcal{KD}_0$ are submonoids of $\mathcal{KD}$.  The proof of Kuratowski's closure-complement theorem relies on observing that $kiki=ki$ and $ikik=ik$, and therefore

\begin{center} $\mathcal{K}_0=\{e, i, k, ki, ik, iki, kik\}$,
\end{center}
\vspace{.2cm}

\noindent where $e$ denotes the identity operator.  $\mathcal{K}_0$ is often called the monoid of \textit{even operators} in the closure-complement problem.  Using the tools of Lemma \ref{lemma_main}, we compute the (no more than seven) members of $\mathcal{K}_0d$:  $d$, $id$, $kd=d$, $kid=d$, $ikd=id$, $ikid=id$, and $kikd=d$.  So $\mathcal{KD}_0\supseteq \mathcal{K}_0d=\{d,id\}$.  In fact, we have the following set equality:

\begin{center} $\mathcal{KD}_0=\{e, i, k, ki, ik, iki, kik, d, id\}$
\end{center}
\vspace{.2cm}

\noindent which is easily verified by using Lemma \ref{lemma_main} to check that $i\mathcal{KD}_0\subseteq\mathcal{KD}_0$, $k\mathcal{KD}_0\subseteq\mathcal{KD}_0$, and $d\mathcal{KD}_0\subseteq\mathcal{KD}_0$.  So there are nine even operators in $\mathcal{KD}_0$.  Applying $c$ to the left (or right) of $\mathcal{KD}_0$ yields nine more operators, the odd operators, as depicted in Figure \ref{fig1}.

\begin{figure}[ht]
\begin{center} \begin{tabular}{|c|c|}
\hline
Even Operators & Odd Operators\\
\hline
$e$ & $c$\\
$i$ & $ci=kc$\\
$k$ & $ck=ki$\\
$ki$ & $cki=ikc$\\
$ik$ & $cik=kic$\\
$iki$ & $ciki=kikc$\\
$kik$ & $ckik=kikc$\\
$d$ & $cd=idc$\\
$id$ & $cid=kcd=dc$\\
\hline
\end{tabular}
\end{center}
\caption{Operators in $\mathcal{KD}=\mathcal{KD}_0\cup c\mathcal{KD}_0$.}
\label{fig1}
\end{figure}
\vspace{.2cm}

The equalities in the last two entries in the table of Figure \ref{fig1} hold because every set in $X$ has the Baire property, allowing us to apply Lemma \ref{lemma_BP} universally.  Noting that $c\mathcal{KD}_0=\mathcal{KD}_0c$ consists of the nine odd operators in the right column, we claim that $\mathcal{KD}=\mathcal{KD}_0\cup c\mathcal{KD}_0$, which proves that $\#\mathcal{KD}\leq 18$, as claimed.  To see this, simply check the following set equalities which show that $\mathcal{KD}_0\cup c\mathcal{KD}_0$ is closed under multiplication from the left by the generating operators $k$, $c$, and $d$:

\begin{center}
$k(\mathcal{KD}_0\cup c\mathcal{KD}_0) = k\mathcal{KD}_0\cup kc\mathcal{KD}_0=\mathcal{KD}_0\cup ci\mathcal{KD}_0=\mathcal{KD}_0\cup c\mathcal{KD}_0$;\\
$c(\mathcal{KD}_0\cup c\mathcal{KD}_0) = c\mathcal{KD}_0\cup cc\mathcal{KD}_0=\mathcal{KD}_0\cup c\mathcal{KD}_0$;\\
$d(\mathcal{KD}_0\cup c\mathcal{KD}_0) = d\mathcal{KD}_0\cup dc\mathcal{KD}_0=\mathcal{KD}_0\cup cid\mathcal{KD}_0=\mathcal{KD}_0\cup c\mathcal{KD}_0$.
\end{center}
\vspace{.3cm}

To prove the second statement of the theorem, we can take for example

\begin{center} $A=(1,2)\cup(2,3)\cup\{4\}\cup[(5,6)\cap\mathbb{Q}]\cup[(6,7)\cap(\mathbb{R}-\mathbb{Q})]$
\end{center}
\vspace{.3cm}

\noindent and see that application of the $9$ even operators in $\mathcal{KD}_0$ yields $9$ distinct sets as depicted in Figure \ref{fig4}.  Taking complements, we get $18$ distinct sets from the $18$ distinct operators in $\mathcal{KD}$.

\begin{figure}[ht]
\begin{center} \begin{tabular}{|c|c|}
\hline
$eA$ & $(1,2)\cup(2,3)\cup\{4\}\cup[(5,6)\cap\mathbb{Q}]\cup[(6,7)\cap(\mathbb{R}-\mathbb{Q})]$\\
\hline
$iA$ & $(1,2)\cup(2,3)$\\
\hline
$kA$ & $[1,3]\cup\{4\}\cup[5,7]$\\
\hline
$kiA$ & $[1,3]$\\
\hline
$ikA$ & $(1,3)\cup(5,7)$\\
\hline
$ikiA$ & $(1,3)$\\
\hline
$kikA$ & $[1,3]\cup[5,7]$\\
\hline
$dA$ & $[1,3]\cup[6,7]$\\
\hline
$idA$ & $(1,3)\cup(6,7)$\\
\hline
\end{tabular}
\end{center}
\caption{Even operators applied to $A$.}
\label{fig4}
\end{figure}
\vspace{.2cm}
\end{proof}

\section*{Vitali Sets and Distinguishing Words Under AC}

Recall that a \textit{Vitali set} is a subset $V\subseteq\mathbb{R}$ consisting of exactly one representative from each coset in the quotient group $\mathbb{R}/\mathbb{Q}$.  Vitali sets can be constructed by invoking the Axiom of Choice, and do not have the Baire property.

\begin{prop}[ZFC] \label{prop1}  Let $W_0\subseteq\mathbb{R}$ be open.  Then there exists a Vitali set $V$ such that $dV=kW_0$.
\end{prop}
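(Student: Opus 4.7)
The plan is to construct, by transfinite recursion of length $\mathfrak{c}$, a Vitali set $V$ contained in $W_0$ with the following density property: $V\cap U$ is nonmeager in $\mathbb{R}$ for every non-empty open $U\subseteq W_0$. (I note the case $W_0=\emptyset$ is tacitly excluded, since any Vitali set is nonmeager and hence has $dV\neq\emptyset$.) This density property suffices: the inclusion $V\subseteq W_0\subseteq kW_0$ together with Lemma \ref{lemma_main}(b) yields $dV\subseteq kW_0$, while for $p\in kW_0$ and any open neighborhood $U\ni p$, the open set $U\cap W_0$ is non-empty and satisfies $V\cap U\supseteq V\cap(U\cap W_0)$ nonmeager, so $p\in dV$.

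To perform the construction, I would first fix a countable basis $\{B_n:n\in\mathbb{N}\}$ of non-empty open sets for $W_0$, and then enumerate in order type $\mathfrak{c}$ all pairs $(n,M)$ with $n\in\mathbb{N}$ and $M\subseteq\mathbb{R}$ a meager $F_\sigma$ set, as $\{(n_\alpha,M_\alpha):\alpha<\mathfrak{c}\}$; this is possible because there are $\mathfrak{c}$ many closed subsets of $\mathbb{R}$ and $\mathfrak{c}^{\aleph_0}=\mathfrak{c}$ in ZFC. At stage $\alpha$, suppose $\{v_\beta:\beta<\alpha\}$ have been chosen in pairwise distinct cosets of $\mathbb{R}/\mathbb{Q}$. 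The set $B_{n_\alpha}\setminus M_\alpha$ is nonmeager (a non-empty open set minus a meager one), hence an uncountable Borel subset of $\mathbb{R}$, hence of cardinality $\mathfrak{c}$ by the perfect set property for Borel sets. Since the union of the previously represented cosets has cardinality at most $|\alpha|\cdot\aleph_0<\mathfrak{c}$, I can pick $v_\alpha\in B_{n_\alpha}\setminus M_\alpha$ lying outside every previously represented coset. After the recursion terminates, I would extend $\{v_\alpha:\alpha<\mathfrak{c}\}$ to a full Vitali set $V$ by adjoining, for each coset not yet represented, any element of that coset in $W_0$ (which exists since every $\mathbb{Q}$-coset is dense in $\mathbb{R}$).

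For the verification, I would observe that for each fixed $n$ and each meager $F_\sigma$ set $M\subseteq\mathbb{R}$, the pair $(n,M)$ appears as $(n_\alpha,M_\alpha)$ for some $\alpha<\mathfrak{c}$, so $v_\alpha\in V\cap B_n$ and $v_\alpha\notin M$, yielding $V\cap B_n\not\subseteq M$. Since every meager set in $\mathbb{R}$ is contained in a meager $F_\sigma$ set, this proves $V\cap B_n$ is nonmeager for every $n$, which by the basis property yields $V\cap U$ nonmeager for every non-empty open $U\subseteq W_0$, as required.

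The main obstacle is reconciling the strict Vitali condition (at most one point per coset) with the density requirement (enough points to remain nonmeager in every piece of $W_0$). The transfinite recursion resolves this tension because at each stage the target set $B_{n_\alpha}\setminus M_\alpha$ has the maximum cardinality $\mathfrak{c}$, while the set of forbidden points from previously represented cosets has strictly smaller cardinality, guaranteeing that a valid new representative always exists.
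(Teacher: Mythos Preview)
Your argument is correct, but the paper takes a rather different and more algebraic route. Instead of diagonalizing against meager $F_\sigma$ sets by transfinite recursion, the paper fixes an irrational $\alpha$, forms the intermediate group $H=\langle\mathbb{Q},\alpha\rangle$, and observes that the cosets of $\mathbb{Q}$ split naturally into countably many classes $P_n=\{v+n\alpha+\mathbb{Q}:v\in V^1\}$ (where $V^1$ is a transversal for $\mathbb{R}/H$), whose unions $R_n$ are pairwise translates partitioning $\mathbb{R}$ and hence each nonmeager. Indexing a countable basis $\{B_n\}$ of $W_0$ by the same $\mathbb{Z}$, one then places into $B_n$ one representative of each coset in $P_n$; the resulting $V_n$ is nonmeager because $R_n=\bigcup_{q\in\mathbb{Q}}(V_n+q)$, and $V=\bigcup_n V_n$ is the desired Vitali set.

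Your transfinite-recursion approach trades this structural trick for a cardinality argument (the perfect set property for Borel sets plus $|\alpha|\cdot\aleph_0<\mathfrak{c}$), which is more flexible and would adapt readily to other $\sigma$-ideals or ambient spaces lacking the group structure. The paper's approach, by contrast, avoids any length-$\mathfrak{c}$ recursion and any appeal to the perfect set theorem: once $V^1$ is chosen, the rest is just countably many choice functions, and nonmeagerness of the pieces follows from the Baire category theorem alone. Both correctly presuppose $W_0\neq\emptyset$, as you noted.
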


\begin{proof}  Let $\alpha$ be an arbitrary irrational real number, and let $H=\langle\mathbb{Q},\alpha\rangle$ be the additive subgroup of $\mathbb{R}$ generated by $\mathbb{Q}$ and $\alpha$, so $H=\{q+n\alpha:q\in\mathbb{Q},n\in\mathbb{Z}\}$.  Let $V^1$ be a set consisting of exactly one representative from each coset of $\mathbb{R}/H$.  For each $n\in\mathbb{Z}$, we define the sets

\begin{center} $P_n=\{v+n\alpha+\mathbb{Q}:v\in V^1\}\subseteq\mathbb{R}/\mathbb{Q}$
\end{center}
\vspace{.2cm}

\noindent and

\begin{center} $R_n=\bigcup P_n\subseteq\mathbb{R}$.
\end{center}
\vspace{.2cm}

We first claim that $\bigcup_{n\in\mathbb{Z}}P_n=\mathbb{R}/\mathbb{Q}$.  The left-to-right inclusion is by definition.  For the right-to-left inclusion, consider an arbitrary coset $x+\mathbb{Q}$ in $\mathbb{R}/\mathbb{Q}$.  There exists a unique element $v\in V^1$ for which $x+H=v+H$, i.e. $x-v\in H$.  Therefore we may write $x-v=q+n\alpha$ for some $q\in\mathbb{Q}$ and some $n\in\mathbb{Z}$.  But then $x-q=v+n\alpha$, whence $x+\mathbb{Q}=v+n\alpha+\mathbb{Q}\in P_n$.  So $\mathbb{R}/\mathbb{Q}\subseteq\bigcup_{n\in\mathbb{Z}}P_n$ as claimed.

Moreover, the sets $P_n$ are pairwise disjoint.  For if $P_n\cap P_m\neq\emptyset$, it means that there are $v,w\in V^1$ for which $v+n\alpha+\mathbb{Q}=w+m\alpha+\mathbb{Q}$, i.e. $v+n\alpha=w+m\alpha+q$ for some $q\in\mathbb{Q}$.  But then $v-w=(m-n)\alpha+q\in H$, so $v=w$ by construction of $V^1$.  In turn, this implies $(m-n)\alpha=-q\in\mathbb{Q}$, and hence $n=m$ since $\alpha$ is irrational.

The preceding two paragraphs imply that the family $\{P_n:n\in\mathbb{Z}\}$ forms a partition of $\mathbb{R}/\mathbb{Q}$.  Consequently, the sets $R_n=n\alpha+R_0$ comprise a partition of $\mathbb{R}$, and we conclude that each set $R_n$ is nonmeager in $\mathbb{R}$.

Next, let $\{B_n:n\in\mathbb{Z}\}$ be a countable basis of open sets for the topology on $W_0$.  For each $n\in\mathbb{Z}$, each coset $v+n\alpha+\mathbb{Q}$ in $P_n$ is dense in $\mathbb{R}$, and hence meets $B_n$.  So let $V_n$ be a set consisting of exactly one element chosen from each intersection $(v+n\alpha+\mathbb{Q})\cap B_n$ ($v\in V^1$).  Then $V=\bigcup_{n\in\mathbb{Z}}V_n$ consists of exactly one representative from each distinct coset in $\bigcup_{n\in\mathbb{Z}}P_n=\mathbb{R}/\mathbb{Q}$, so $V$ is a Vitali set.

If $x\in kW_0$, and $U\subseteq\mathbb{R}$ is an arbitrary open neighborhood of $x$, then there exists $n\in\mathbb{Z}$ with $B_n\subseteq U$.  But then $V_n\subseteq V\cap B_n\subseteq V\cap U$, and $R_n=\bigcup P_n=\bigcup_{z\in V_n}\bigcup_{q\in\mathbb{Q}}(z+q)=\bigcup_{q\in\mathbb{Q}}(V_n+q)$.  Since $R_n$ is nonmeager, it follows that $V_n$ is nonmeager and hence $V\cap U$ is nonmeager.  So $x\in dV$ and we have shown $kW_0\subseteq dV$.  Conversely, $V\subseteq W_0$ so $dV\subseteq kV\subseteq kW_0$, and the proposition is proven.
\end{proof}

\begin{prop}[ZFC] \label{prop2}  Let $W_0\subseteq W_1\subseteq\mathbb{R}$ such that $W_0$ and $W_1$ are both open.  Then there exists a Vitali set $V$ such that $dV=kW_0$ and $kV=kW_1$.
\end{prop}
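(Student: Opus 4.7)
The plan is to adapt the construction of Proposition~\ref{prop1} by supplementing its Vitali set with a countable density-enforcing subset, so that $kV = kW_1$ holds in addition to $dV = kW_0$. The key observation is that a countable set is meager, and hence invisible to the operator $d$, even though it is free to enlarge $kV$.

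Fix $\alpha$, $H$, and $V^1$ exactly as in Proposition~\ref{prop1}, together with a countable basis $\{B_n\}$ for $W_0$. Enumerate a countable basis $\{C_m : m \in \mathbb{N}\}$ for $W_1$, and inductively choose $w_m \in C_m$ so that the cosets $c_m := w_m + \mathbb{Q}$ are pairwise distinct; this is possible because every coset of $\mathbb{Q}$ is dense in $\mathbb{R}$ (so meets each $C_m$), while at stage $m$ only finitely many cosets have been used. Let $V_{\text{extra}} = \{w_m : m \in \mathbb{N}\}$: it is dense in $W_1$, countable (hence meager), and contains exactly one representative from each of countably many distinct cosets.

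Next, run the Proposition~\ref{prop1} construction inside $W_0$ with one adjustment: for each $n$ and each coset $P \in P_n$, pick a representative $v_P \in P \cap B_n$ only when $P \notin \{c_m : m \in \mathbb{N}\}$. Call the resulting set $V_{\text{main}}$, and set $V = V_{\text{main}} \cup V_{\text{extra}}$. Since $V_{\text{extra}}$ absorbs exactly the cosets skipped when forming $V_{\text{main}}$, $V$ is a Vitali set.

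The equality $kV = kW_1$ is immediate, because $V \subseteq W_1$ and $V_{\text{extra}} \subseteq V$ is dense in $W_1$. The inclusion $kW_0 \subseteq dV$ follows the argument of Proposition~\ref{prop1}: each modified $V_n$ still satisfies $V_n + \mathbb{Q} = R_n$ minus at most countably many (hence meager) points, so $V_n$ remains nonmeager, and $V \cap U \supseteq V_n$ is nonmeager for every open $U$ containing some basis element $B_n$, which certainly occurs whenever $U$ meets $kW_0$. The reverse inclusion $dV \subseteq kW_0$ is where the meagerness of $V_{\text{extra}}$ pays off: for $x \notin kW_0$, choose an open $U \ni x$ with $U \cap kW_0 = \emptyset$; since $V_{\text{main}} \subseteq W_0 \subseteq kW_0$, we have $V \cap U \subseteq V_{\text{extra}}$, which is countable and hence meager, so $x \notin dV$. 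The only real subtlety is the bookkeeping required to meet the Vitali constraint, the density requirement for $W_1$, and the nonmeagerness requirement inside $W_0$ simultaneously; dumping the density-enforcing witnesses into a countable meager set handles all three at once.
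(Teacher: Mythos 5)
Your proof is correct, and it pivots on the same key observation as the paper's---a countable set is meager, hence invisible to $d$, yet free to force density in $W_1$---but the implementation is genuinely different. The paper uses Proposition \ref{prop1} as a black box: it takes the Vitali set $V_0$ with $dV_0=kW_0$ and performs a coset-preserving swap, removing countably many points $v_n\in V_0$ and replacing each by a rationally equivalent point $w_n\in(v_n+\mathbb{Q})\cap C_n$; then $V\Delta V_0$ is countable, so $dV=dV_0=kW_0$ follows in one stroke from Lemma \ref{lemma_main} (d) and (f), while $kV=kW_1$ holds because the $w_n$ are dense in $W_1$. You instead reserve countably many pairwise distinct cosets up front, place the density witnesses $w_m$ in them, and re-run the Proposition \ref{prop1} construction on the remaining cosets; this obliges you to re-verify the category computations, which you do correctly: deleting countably many cosets removes only a countable (meager) set from each $R_n$, so each $V_n$ stays nonmeager and $kW_0\subseteq dV$ survives, and any point outside $kW_0$ has a neighborhood meeting $V$ only inside the countable set $V_{\mathrm{extra}}$, giving $dV\subseteq kW_0$ (this direct argument is needed, since $dV\subseteq kV$ no longer suffices once $kV=kW_1$). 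The paper's swap buys a shorter verification; your version shows the Proposition \ref{prop1} construction is robust under discarding countably many cosets. As a small streamlining, your reverse inclusion also follows in one line from Lemma \ref{lemma_main} (b), (d), (f): $dV=dV_{\mathrm{main}}\cup dV_{\mathrm{extra}}=dV_{\mathrm{main}}\subseteq kV_{\mathrm{main}}\subseteq kW_0$.
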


\begin{proof}  Using Proposition \ref{prop1}, start with a Vitali set $V_0$ satisfying $dV_0=kW_0$.  Let $\{v_n:n\in \mathbb{N}\}$ be an arbitrary sequence of distinct elements in $V_0$, and let $\{C_n:n\in\mathbb{N}\}$ be a countable basis of open sets for the topology on $W_1$.  For each $n$, let $w_n\in (v_n+\mathbb{Q})\cap C_n$.  Define $V=(V_0-\{v_n:n\in \mathbb{N}\})\cup\{w_n:n\in\mathbb{N}\}$, so $V$ is a Vitali set.

Since $V\Delta V_0$ is countable, hence meager, we have $dV=dV_0=kW_0$.  We also have $kV\subseteq kW_1$ since $V\subseteq W_1$, and $kW_1\subseteq kV$ since $V$ is dense in $W_1$ by construction.
\end{proof}

\begin{proof}[Proof of Theorem \ref{thm1}]  Working in ZFC, Lemma \ref{lemma_main} still holds, so $\mathcal{KD}$ consists of at least the $18$ operators in $\mathcal{KD}_0\cup c\mathcal{KD}_0$ (see Figure \ref{fig1}).  However, the identities in Lemma \ref{lemma_BP} do not apply to every subset of $X$, and thus in general we do not have $idc=cd$, $id=cdc$, $d=cidc$, or $dc=kcd$.  To see that these equalities fail, we apply Proposition \ref{prop2} and construct $V$ a Vitali set satisfying $dV=[8,9]$ and $kV=[8,10]$. 

The complement $cV$ has the following property: for every open set $U$ in $\mathbb{R}$, the intersection $U\cap cV$ contains a representative from each coset of $\mathbb{Q}$ (in fact infinitely many representatives).  Thus $\mathbb{R}\subseteq \bigcup_{q\in\mathbb{Q}}q+(U\cap cV)$, so $\mathbb{R}$ is covered by countably many translates of $U\cap cV$.  This implies $U\cap cV$ is nonmeager.

The preceding paragraph implies $dcV=kcV=\mathbb{R}$.  So $V$ distinguishes additional operators in the monoid $\mathcal{KD}$, as depicted in the table below:

\begin{figure}[ht]
\begin{center}
\begin{tabular}{|rl||rl|}
\hline
$idcV=$ & $\mathbb{R}$ & $cdV=$ & $\mathbb{R}-[8,10]$\\
\hline
$idV=$ & $(8,9)$ & $cdcV=$ & $\emptyset$\\
\hline
$dV=$ & $[8,9]$ & $cidcV=$ & $\emptyset$\\
\hline
$dcV=$ & $\mathbb{R}$ & $kcdV=$ & $\mathbb{R}-(8,10)$\\
\hline
\end{tabular}
\end{center}
\label{fig2}
\end{figure}
\vspace{.2cm}

Thus we claim that in ZFC, we have

\begin{center} $\mathcal{KD}=\{e, i, k, ki, ik, iki, kik, d, id, c, ci, ck, cki, cik, ciki, ckik, cd, cid, dc, idc, cdc, cidc\}$.
\end{center}
\vspace{.2cm}

To verify the claim, one must check that $\mathcal{KD}$ is invariant under both left and right multiplication by $k$, $c$, and $d$, and we leave the task to the reader using Lemma \ref{lemma_main}.  So $\#\mathcal{KD}\leq 22$.

For an example of an explicit initial set in $\mathbb{R}$ which distinguishes all $22$ operators, we give

\begin{center} $A=(1,2)\cup(2,3)\cup\{4\}\cup[(5,6)\cap\mathbb{Q}]\cup[(6,7)\cap(\mathbb{R}-\mathbb{Q})]\cup V$, 
\end{center}
\vspace{.2cm}

\noindent where $V$ is as in the first paragraph of the proof.
\end{proof}

\begin{rem}  Examining the $22$ operators in $\mathcal{KD}$ in Theorem \ref{thm1}, we may regard them as either \textit{even} or \textit{odd} depending on the number of instances of the $c$ operator in the reduced word.  So we find $11$ even operators and $11$ odd operators.  However, in this case the monoid $\mathcal{KD}_0$ generated by $k$, $i$, and $d$ does not yield all even operators, nor does either of the sets $c\mathcal{KD}_0$ or $\mathcal{KD}_0$ consist of all odd operators.  (Contrast with the situation in the original closure-complement problem, and in Theorem \ref{thm2}.)
\end{rem}

\section*{Partial Orderings and Other Addenda}

The monoid $\mathcal{KD}$ admits a natural partial ordering defined by the rule $o_1\leq o_2$ if and only if $o_1A\subseteq o_2A$ for every set $A\subseteq X$.  The partial ordering on $\mathcal{K}_0$ (the monoid generated by $k$ and $i$) has been diagrammed by various authors; see for instance \cite{Gardner_Jackson_2008a}.  In general if $o_1\leq o_2$ then $io_1\leq io_2$, $ko_1\leq ko_2$, and $co_1\geq co_2$.  We observe also the following proposition.

\begin{prop}[ZF+DC]  The following relations hold among even operators in $\mathcal{KD}$.
\begin{enumerate}[(a)]
		\item $d\leq kik$;
		\item $iki\leq cdc$;
		\item $cdc\leq id$;
		\item $cdc\leq cidc$; 
		\item $ki\leq cidc$; and
		\item $cidc\leq d$.
\end{enumerate}
\end{prop}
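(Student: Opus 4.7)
The plan is to prove each of the six inclusions $o_1 A \subseteq o_2 A$ directly, for arbitrary $A \subseteq X$, leveraging Lemma \ref{lemma_main} and the DeMorgan laws $ck = ic$ and $ci = kc$. Four of the six (namely (a), (d), (e), and (f)) reduce to brief combinations of monotonicity with the identities already established; the two substantive ones are (b) and (c), which require a direct category-theoretic argument about the relation between the regular-open structure of $A$ and points of second category for $cA$.

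For the easy cases: (a) follows from the monotonicity of $d$ (Lemma \ref{lemma_main}(a)) together with Lemma \ref{lemma_main}(h): $dA \subseteq dkA = kikA$. Statement (d) is immediate, since $idcA \subseteq dcA$ yields $cdcA \subseteq cidcA$ by complementation. Statement (e) uses Lemma \ref{lemma_main}(b) to get $dcA \subseteq kcA$; then monotonicity of $i$ and DeMorgan give $idcA \subseteq ikcA = ckiA$, which complements to $kiA \subseteq cidcA$. For (f), I take $x \in cidcA$ and an arbitrary open $U \ni x$; since $x \notin idcA$, one can find a point $y \in U$ with $y \notin dcA$, hence a nonempty open $V \subseteq U$ with $V \cap cA$ meager. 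Then $U \cap A \supseteq V \setminus (V \cap cA)$, which is a nonempty open set (nonmeager by the Baire Category Theorem, available under DC) minus a meager set, so $U \cap A$ is nonmeager, giving $x \in dA$.

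The main work is in (b) and (c). For (b), the key observation is that $ikiA \cap cA \subseteq kiA \setminus iA$, which is the boundary of the open set $iA$, hence closed nowhere dense and therefore meager. Since $ikiA$ itself is an open neighborhood of any of its points, this witnesses directly that no $x \in ikiA$ lies in $dcA$; that is, $ikiA \subseteq cdcA$. For (c), if $x \in cdcA$ then there is an open $U \ni x$ with $U \cap cA$ meager; I claim $U \subseteq dA$, which gives $x \in idA$ since $U$ is open. Indeed, for any $y \in U$ and any open $W \ni y$, the intersection $W \cap U$ is a nonempty open set, hence nonmeager (DC), and $(W \cap U) \cap cA \subseteq U \cap cA$ is meager, so $W \cap A \supseteq (W \cap U) \setminus (U \cap cA)$ is nonmeager.

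The main obstacle is getting the category bookkeeping right in (b) and (c): one must be careful that the ``meager minus nonmeager open'' arguments are valid in ZF+DC (which is why the Baire Category Theorem equivalence recorded after Lemma \ref{lemma_main}(c) is crucial), and in (b) one must resist the temptation to try $ikiA \subseteq iA$ (which is false in general) and instead route through the boundary $kiA \setminus iA$. Once these two arguments are in place, the remaining inclusions are essentially one-line consequences of Lemma \ref{lemma_main} and DeMorgan.
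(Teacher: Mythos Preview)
Your proof is correct. The main difference from the paper is that you work harder than necessary on (b), (e), and (f). The paper observes that (b) is the dual of (a): having shown $d \leq kik$, apply both sides to $cA$ to get $dc \leq kikc$, then complement to obtain $iki = ckikc \leq cdc$. Likewise, the paper derives (e) and (f) in one stroke each by applying $k$ on the left to (b) and (c) respectively, using $kiki = ki$, $kcdc = cidc$, and $kid = d$ (Lemma~\ref{lemma_main}(i)). So only (c) genuinely requires the direct category argument you give (and yours matches the paper's there). Your boundary argument for (b) is valid and pleasant, and your direct argument for (f) is fine, but both are subsumed by the duality-and-closure trick; in particular your framing of (b) and (c) as the two ``substantive'' cases overstates things, since (b) is a one-liner from (a).

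A minor point on (a): the paper uses $d = kid \leq kik$ from $d \leq k$ and Lemma~\ref{lemma_main}(i), while you use $dA \subseteq dkA = kikA$ from monotonicity and Lemma~\ref{lemma_main}(h); both are equally short.
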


\begin{proof} \textit{(a)}  Since $d\leq k$, we have $d=kid\leq kik$.

\textit{(b)} By (a) we have $dc\leq kikc$, and hence $cdc\geq ckikc=iki$.

\textit{(c)}  Let $A\subseteq X$ be arbitrary.  If $p\in cdcA$, then $p$ has an open neighborhood $U$ for which $U\cap cA$ is meager.  Given arbitrary $x\in U$ and an arbitrary open neighborhood $V$ of $x$, we can observe that $(U\cap V)\cap cA\subseteq U\cap cA$ is meager, and hence $(U\cap V)\cap A$ is nonmeager, because $U\cap V$ is nonmeager (being an open set).  So $V\cap A$ is nonmeager, which implies $x\in dA$.  Therefore $U\subseteq dA$ which implies $p\in idA$ and $cdcA\subseteq idA$.

\textit{(d)}  Since $id\leq d$ we have $idc\leq dc$ and therefore $cdc\leq cidc$.

\textit{(e)}  Apply $k$ to the left side of the inequality in (b).

\textit{(f)}  Apply $k$ to the left side of the inequality in (c).
\end{proof}

Combining the preceding inequalities with the known ordering on $\mathcal{K}_0$, we obtain the partial ordering on the even operators of $\mathcal{KD}$ presented in Figure \ref{fig3}.  For each pair of even operators $o_1,o_2\in\mathcal{KD}$ not connected by an arrow in the diagram, the reader may verify that $o_1A\not\subseteq o_2A$ where $A$ is one of the sets given in the proofs of Theorems \ref{thm1} and \ref{thm2}.  Thus the diagram is complete.

\begin{figure}[ht]
\begin{minipage}{0.48\textwidth}
\begin{center}
\begin{tikzpicture}[->,>=stealth',shorten >=1pt,auto,node distance=2.5cm,
        scale = .75,transform shape, state without output/.append style={draw=none}]

\node[state] (i) [] {$i$};
\node[state] (iki) [position=60:{1.4cm} from i] {$iki$};
\node[state] (cdc) [position=0:{0.8cm} from iki] {$cdc$};
\node[state] (id) [position=0:{0.8cm} from cdc] {$id$};
\node[state] (ik) [position=0:{0.8cm} from id] {$ik$};
\node[state] (ki) [position=-45:{0.8cm} from iki] {$ki$};
\node[state] (d) [position=-45:{0.8cm} from id] {$d$};
\node[state] (cidc) [position=-45:{0.8cm} from cdc] {$cidc$};
\node[state] (kik) [position=0:{0.8cm} from d] {$kik$};
\node[state] (k) [position=-70:{0.8cm} from kik] {$k$};
\node[state] (e) [position=-90:{0.7cm} from ki] {$e$};

\path
(i) edge node {} (iki)
(iki) edge node {} (cdc)
(i) edge node {} (e)
(cdc) edge node {} (id)
(cdc) edge node {} (cidc)
(iki) edge node {} (ki)
(e) edge node {} (k)
(ki) edge node {} (cidc)
(ik) edge node {} (kik)
(id) edge node {} (ik)
(id) edge node {} (d)
(d) edge node {} (kik)
(cidc) edge node {} (d)
(kik) edge node {} (k);

\end{tikzpicture}
\end{center}
\end{minipage} \begin{minipage}{0.48\textwidth}
\begin{center}
\begin{tikzpicture}[->,>=stealth',shorten >=1pt,auto,node distance=2.5cm,
        scale = .75,transform shape, state without output/.append style={draw=none}]

\node[state] (i) [] {$i$};
\node[state] (iki) [position=60:{1.4cm} from i] {$iki$};
\node[state] (id) [position=0:{0.8cm} from iki] {$id$};
\node[state] (ik) [position=0:{0.8cm} from id] {$ik$};
\node[state] (ki) [position=-45:{0.8cm} from iki] {$ki$};
\node[state] (d) [position=-45:{0.8cm} from id] {$d$};
\node[state] (kik) [position=0:{0.8cm} from d] {$kik$};
\node[state] (k) [position=-70:{0.8cm} from kik] {$k$};
\node[state] (e) [position=-90:{0.7cm} from ki] {$e$};

\path

(i) edge node {} (iki)
(iki) edge node {} (cdc)
(i) edge node {} (e)
(iki) edge node {} (ki)
(e) edge node {} (k)
(ik) edge node {} (kik)
(id) edge node {} (ik)
(id) edge node {} (d)
(d) edge node {} (kik)
(id) edge node {} (d)
(ki) edge node {} (d)
(kik) edge node {} (k);

\end{tikzpicture}
\end{center}
\end{minipage}
\caption{\textit{Left:} the partial ordering on the $11$ even operators of $\mathcal{KD}$ in ZFC.  \textit{Right:} the partial ordering on the $9$ even operators of $\mathcal{KD}$ in ZF+DC+PB.}
\label{fig3}
\end{figure}
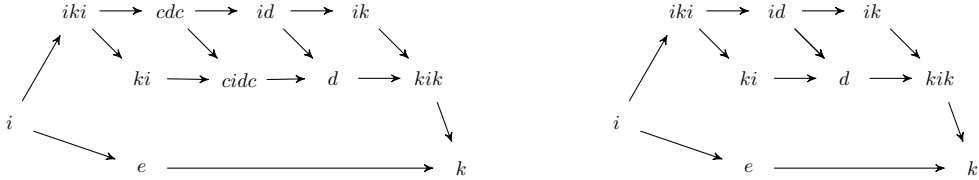

\begin{example}[Another ZF-Independent Problem]  We also consider the problem of the cardinality of the monoid $\mathcal{KFD}=\langle k, c, f, d\rangle$ generated by $k$, $c$, $d$, and the topological frontier operator $f$ defined by $fA=kA\cap kcA$ for all sets $A$.  As one would expect, the cardinality of this monoid also depends on axiomatic assumptions.

The submonoid generated by $k$, $c$, and $f$ has size $\leq 34$, as shown by Gaida and Eremenko in \cite{Gaida_Eremenko_1974a}.  This can be computed using the following identities:   $fff=ff$, $fc=kf=f$, $ffk=fk$, $ifk=0$, $fkik=fki$, and $fiki=fik$, where $0$ denotes the ``empty set operator'' defined by $0A=\emptyset$ for every $A$.  To this list we add the following three identities whose proofs we leave to the reader: $df=kif$, $fid=fd$, $dfk=0$.  We are also interested in cardinality of the submonoid generated by $k, i, f, d$.  We compute the following presentations and cardinalities.

\begin{center}
\begin{tabular}{|c|c|c|p{6cm}|}
\hline
Axiom System & Generators & Cardinality & List of Elements\\
\hline
ZF+DC & $\langle k, i, f, d\rangle$ & $20$ & $\{e, k, i, d, f, ik, fk, ki, fi, fd, id, if, ff,$\newline $kif, kik, fik, 0=ifk, iki, fki, fif\}$\\
\hline
ZF+DC+PB & $\langle k, c, f, d\rangle$ & $40$ & $\{$above$\}\cup\{c, ck, ci, cd, cf, cik, cfk, cki,$\newline $cfi, cfd, cid, cif, cff, ckif, ckik, cfik,$\newline $1=c0, ciki, cfki, cfif\}$\\
\hline
ZFC & $\langle k, c, f, d\rangle$ & $46$ & $\{$above$\}\cup\{dc, idc, cdc, cidc, fdc, cfdc\}$\\
\hline
\end{tabular}
\end{center}
\vspace{.2cm}

The initial set $A$ given in the proof of Theorem \ref{thm1} is sufficient to distinguish the $46$ operators in $\mathcal{KFD}$.
\end{example}

\begin{rem}[Suggestions for Further Projects]  Several variant problems remain open to be solved by an interested party.  For example, how many distinct sets are obtainable using $k$, $i$, $d$, together with one or both of $\cap$ and $\cup$?  (See \cite{Gardner_Jackson_2008a} Section 4 for more information.)

Also, it was shown by Kuratowski that it is possible to obtain infinitely many sets using $k$, $c$, and either $\cap$ or $\cup$.  We believe replacing $k$ with $d$ should yield a finite answer and it may be interesting to compute.

More broadly, the operator $d$ is an example of a \textit{local function} associated to a $\sigma$-ideal $\mathcal{I}$ on a topological space $X$.  A general local function $\ell$ associated to $\mathcal{I}$ assigns to a set $A\subseteq X$ the set $\ell A$ consisting of all points $p\in X$ for which every open neighborhood $U$ of $p$ satisfies $U\cap A\notin\mathcal{I}$.  For $d$, the $\sigma$-ideal in question is the family of meager subsets of $X$.  It may be interesting to study variants of the Kuratowski problem using local functions associated to other $\sigma$-ideals.

Moreover, given a local function $\ell$, the operator $k_\ell$ defined by $k_\ell A=A\cup \ell A$ is an example of a Kuratowski closure operator, which generates a topology finer than the original.  In fact the new topology and the old are \textit{saturated} in the sense that every open set in either has nonempty interior in the other.  There exists some literature on variants of the Kuratowski problem in spaces equipped with multiple topologies (i.e. \textit{polytopological spaces}), including the special case of saturated polytopological spaces\textemdash see especially \cite{Banakh_2018a} and \cite{CCGS_2020a}.  The creative reader may be able to craft interesting problems by combining the machinery of local functions and polytopological spaces.
\end{rem}

\end{document}